\newcommand{\CC}{\mathbb{C}}
\newcommand{\NN}{\mathbb{N}}
\newcommand{\TT}{\mathbb{T}}
\newtheorem{theorem}{Theorem}[section]
\newtheorem{corollary}[theorem]{Corollary}
\newtheorem{lemma}[theorem]{Lemma}
\newtheorem{proposition}[theorem]{Proposition}
\theoremstyle{definition}
\newtheorem{definition}[theorem]{Definition}
\theoremstyle{remark}
\newcommand{\Hmm}[1]{\leavevmode{\marginpar{\tiny%
$\hbox to 0mm{\hspace*{-0.5mm}$\leftarrow$\hss}%
\vcenter{\vrule depth 0.1mm height 0.1mm width \the\marginparwidth}%
\hbox to 0mm{\hss$\rightarrow$\hspace*{-0.5mm}}$\\\relax\raggedright
#1}}}
\begin{document}

\title[A characterization of  discrete  spectrum]{An autocorrelation and discrete  spectrum  for dynamical systems on metric spaces}

\author[Daniel Lenz]{Daniel Lenz$^1$}

\address{ $^1$ Mathematisches Institut, Friedrich Schiller Universit\"at Jena,
  D-03477 Jena, Germany, daniel.lenz@uni-jena.de,
  URL: http://www.analysis-lenz.uni-jena.de/ }

\begin{abstract} We study dynamical systems $(X,G,m)$ with a compact
metric space $X$ and a locally compact, $\sigma$-compact,  abelian
group $G$. We show that such a  system has discrete spectrum if and
only if a certain space average  over the metric is a Bohr almost
periodic function. In this way, this average over the metric plays
for general dynamical systems  a similar role as the autocorrelation
measure  plays in the study of aperiodic order for  special
dynamical systems based on point sets.
\end{abstract}

\maketitle

%\tableofcontents

\section*{Introduction}\label{Introduction}
Dynamical systems with  discrete  spectrum have met substantial
interest in the past. In particular, they  have attracted quite some
attention in recent years.

One reason is that such systems play an important role in the
investigation of aperiodic order. Aperiodic order, also known as
mathematical theory of quasicrystals, has emerged as  a substantial
topic of research over the last three decades,  see e.g. \cite{TAO}
for a recent monograph and \cite{KLS} for a recent collection of
surveys. A key feature of aperiodic order is the occurrence of
(pure) point diffraction. Due to a collective effort over the years
pure point diffraction is now understood as discrete spectrum of
suitable associated dynamical systems \cite{Dwo,LMS,Gou,BL,LS,LM}.

Another instance of the recent interest in  discrete spectra can be
found in a series of works which analyse  such spectra via weak
notions of equicontinuity, \cite{GR,GM,GD}. These works provide in
particular a characterization of discrete spectrum  (see
\cite{FGJ,Ver} for related  work as well) and a characterization of
discrete spectrum with continuous eigenfunctions and unique
ergodicity.

The dynamical systems $(X,G,m)$ underlying the investigation of
aperiodic order (and defined in  detail in Section
\ref{Connections}) have a special structure. The compact space $X$,
on which the locally compact, $\sigma$-compact,   abelian group $G$
acts,  consists of point sets or more generally measures.
Accordingly, these systems are known as \textit{translation bounded
measure dynamical systems} (TMDS). The fact that the points of $X$
are measures  allows one to pair elements of $X$ with elements from
the vector space $C_c (G)$ of continuous compactly supported
functions on the group resulting in a map  $N$ from $C_c (G)$ to
functions on $X$. Via this map one can then define the
\textit{autocorrelation measure} $\gamma$ associated to $(X,G,m)$.
The Fourier transform of the autocorrelation measure is known as
\textit{diffraction measure}. The diffraction measure or,
equivalently, the  autocorrelation measure encodes a remarkable
amount of information on the original system. In fact, a main result
of the theory (already mentioned above and discussed in Section
\ref{Connections} in more detail) can be stated as follows:

\medskip

\textbf{Result - TMDS. \cite{LMS,Gou,BL,LS,LM}} \textit{The TMDS
$(X,G,m)$ has discrete spectrum if and only if the measure $\gamma$
is strongly almost periodic. In this case, the group generated by
the frequencies of $\gamma$ is the group of eigenvalues of
$(X,G,m)$.}

\medskip

In a  general dynamical system $(X,G,m)$ (again with notation to be
explained in detail later in Section \ref{Preliminaries}) the points
of $X$ can not be paired with elements of $C_c (G)$. Hence, such a
system does not admit an autocorrelation. However, if $d$ is a
metric on $X$ inducing the topology, then -  as we will show below -
the function
$$\underline{d} : G\longrightarrow [0,\infty), \underline{d}(t) =
\int_X d(x,tx) dm(x),$$ can serve as a convenient analogue to the
autocorrelation. Indeed, our main abstract result  reads as follows:

\medskip

\textbf{Main result.} (Compare Theorem 4.1 below.) \textit{The
dynamical system $(X,G,m)$ has   discrete spectrum if and only if
$\underline{d}$ is almost periodic in the sense of Bohr. In this
case, the group of eigenvalues is generated by the frequencies of
$\underline{d}$.}

\medskip

For general dynamical systems over metric spaces, this result
provides an analogue to the result above for TMDS. As a consequence
we also obtain (in Corollary \ref{cor-sol}) a converse to a result
of \cite{Sol}. This is particular remarkable as it is mentioned in
\cite{Sol} that 'it is unlikely' that such a converse hold. Finally
-  as it is to be expected  - our considerations allow us to also
reprove the above result for TMDS  provided the group $G$ is
metrizable, see Section \ref{Connections}.

Our approach to  discrete spectrum is somewhat complementary to the
approach in the  quoted works \cite{GR,GM,GD,Ver} above. A central
quantity in these works is the pseudometric $\overline{d}$ on $X$,
which arises by averaging $d$ over $G$,
$$\overline{d} (x,y) =\limsup_{n\to \infty} \frac{1}{m_G (F_n)} \int_{F_n}
d(t x, ty) d m_G (t),$$ where $(F_n)$ is a F{\o}lner sequence and
$m_G$ denotes Haar measure on $G$. The discreteness of the spectrum
(and related phenomena) is then encoded in equicontinuity and
covering properties with respect to the topology induced by
$\overline{d}$. In comparison our key quantity $\underline{d}$ can
(essentially) be seen as a metric on $G$, which arises by averaging
over $X$. Discreteness of the spectrum is then encoded in almost
periodicity properties of $\underline{d}$. These two points of view
are certainly related. For example, it is possible to use our result
to reprove parts of the abstract considerations of \cite{GD} on
spectral isomorphy. This and more will be considered elsewhere.

\medskip

The necessary notation and set-up for our investigations in provided
in Section \ref{Preliminaries}. The main technical tools are
gathered in Section \ref{Functions}. There we also introduce the
domination relation $\prec$ which is a key ingredient in our
analysis. With these tools we can then provide various
characterizations of  almost periodicity  of $\underline{d}$ in
Section \ref{Almost}. The main result, given in Section
\ref{Characterization},  is then a rather direct consequence of
these characterizations.

\section{Preliminaries} \label{Preliminaries}
In this section we set up notation and recall a few standard facts
on dynamical systems. The material is well-known.

\bigskip

We consider  a compact space $X$ equipped with a continuous  action
$$G\times X\longrightarrow X, (t,x)\mapsto
tx,$$ of the locally compact, $\sigma$-compact,  abelian group $G$
and a probability measure $m$, which is invariant under the action
of $G$. We then call $(X,G,m)$ a \textit{dynamical system over the
space $X$}. Throughout we will assume that the topology of $X$ is
induced by a metric. This metric will usually be denoted by  $d$.

The action of $G$ on $X$ induces  unitary operators $T_t : L^2
(X,m)\longrightarrow L^2 (X,m)$ with $$ T_t f  = f(t\;  \cdot)$$ for
each $t\in G$. Here, $L^2 (X,m)$ is the Hilbert space of
(equivalence classes of) square integrable functions on $X$. It is
equipped with the inner product
$$\langle f, g\rangle = \int_X \overline{f} g\,  dm$$
and the associated norm
$$\|f\| :=\sqrt{\langle f, f\rangle}$$
for $f,g\in L^2 (X,m)$. We will be particularly interested in
continuous functions on $X$ and will denote the vector space of all
such functions by $C(X)$.

\smallskip

An $f\in L^2 (X,m)$ with $f\neq 0$ is called an
\textit{eigenfunction to the eigenvalue} $\gamma \in\widehat{G}$ if
$T_t f = \gamma (t) f$ holds  for each  $t\in G$. Here,
$\widehat{G}$ is the \textit{dual group of} $G$ consisting of all
continuous group homomorphisms $\gamma : G\longrightarrow \{z\in \CC
: |z| = 1\}$ equipped with multiplication of functions  and complex
conjugation as product and inverse respectively. We denote the group
generated by the set of eigenvalues as the \textit{group of
eigenvalues}. If the dynamical system is minimal (i.e. each orbit is
dense) or ergodic (i.e. any measurable invariant set has measure $0$
or $1$),  then the set of eigenvalues forms already a group.

\smallskip

The dynamical system $(X,G,m)$ is said to have \textit{ discrete
spectrum} if there exists an orthonormal basis of $L^2 (X,m)$
consisting of eigenfunctions.

\smallskip

Whenever $f$ is a bounded function from a set $Y$ to the complex
numbers, we define the \textit{supremum norm} of $f$ as
$\|f\|_\infty := \sup\{|f(y)| : y\in Y\}$. We will be interested in
the cases $Y = G$ and $Y = X$.

\smallskip

We write the group operation on $G$ additively and denote the
neutral element of $G$ by $0$.

\section{The functions $\underline{e}$ and $e'$}\label{Functions}

Let $(X,G,m)$ be a dynamical system over a compact  space $X$. A
\textit{pseudometric} on a set $Y$  is a function  $e: Y\times
Y\longrightarrow [0,\infty)$ satisfying $ e(x,x) =0$, $e(x,y) =
e(y,x)$ and $e(x,y) \leq e(x,z) + e(z,y)$ for all $x,y,z\in Y$. We
will be interested in pseudometrics on $X$ and on  $G$.

To a continuous pseudometric $e$ on $X$ we associate the functions
$$ \underline{e} : G\longrightarrow [0,\infty), \;
\underline{e}(t):=\int_X e(x,tx) dm(x),$$ and
$$e' : G\times G\longrightarrow [0,\infty),\; e' (s,t) := \int_X e
(sx, tx) dm(x).$$

A short computation (using the invariance of $m$)  gives
$$ e' (s,t) = \underline{e}(s-t) \mbox{ and } \underline{e}(s) = e'(0,s).$$
For this reason properties of $\underline{e}$ and of  $e'$ are
strongly connected and it usually suffices to study one of these
functions. A few basic properties of $e'$ are gathered next.

\begin{proposition}[Basic properties of $e'$]
The function $e'$ is a continuous, bounded  and $G$-invariant
pseudometric.
\end{proposition}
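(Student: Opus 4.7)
The plan is to check each of the four asserted properties in turn; three are routine and one (continuity) carries the actual content.

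For the pseudometric axioms: $e'(s,s) = 0$ and the symmetry $e'(s,t) = e'(t,s)$ are immediate from the corresponding properties of $e$ applied pointwise in $x$, and the triangle inequality is obtained by integrating the pointwise triangle inequality for $e$, giving $e'(s,t) \le \int_X [e(sx,rx) + e(rx,tx)]\,dm(x) = e'(s,r) + e'(r,t)$. Boundedness is automatic: the continuous function $e$ on the compact space $X\times X$ is bounded, and $m$ is a probability measure, so $\|e'\|_\infty \le \|e\|_\infty$. The $G$-invariance, which I read as $e'(s+r,t+r) = e'(s,t)$, is the one step that uses invariance of $m$: pushing the integral forward under $x\mapsto rx$ removes the common translation $r$.

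The only nontrivial property is continuity of $e'$ on $G\times G$. Since $e'(s,t) = \underline{e}(s-t)$, and since the triangle inequality together with $G$-invariance yields the estimate $|e'(s,t) - e'(s_0,t_0)| \le \underline{e}(s-s_0) + \underline{e}(t-t_0)$, it suffices to prove that $\underline{e}$ is continuous at $0\in G$. For this I would introduce the jointly continuous auxiliary function $F\colon G\times X \to [0,\infty)$, $F(t,x) := e(x,tx)$, which vanishes identically on $\{0\}\times X$. Given $\varepsilon > 0$, the open set $\{F < \varepsilon\}$ contains the compact slice $\{0\}\times X$, so the tube lemma supplies a neighborhood $U$ of $0$ in $G$ with $F(t,x) < \varepsilon$ for all $(t,x)\in U\times X$. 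Integrating against $m$ then gives $\underline{e}(t) \le \varepsilon$ for $t\in U$, proving continuity at $0$.

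The main obstacle to anticipate is precisely this continuity step: because $G$ is only assumed locally compact and $\sigma$-compact (in particular, not necessarily metrizable), a naive sequential dominated convergence argument against the bound $\|e\|_\infty$ is awkward to run. That is exactly why compactness of $X$ and the tube lemma are the natural tools, converting pointwise vanishing at $t=0$ into uniform smallness over all of $X$.
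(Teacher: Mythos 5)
Your proof is correct and follows essentially the same route as the paper, which simply asserts that continuity follows from continuity of the group action and compactness of $X$, that boundedness follows from $m$ being a probability measure, that $G$-invariance follows from the formula $e'(s,t)=\underline{e}(s-t)$, and that the pseudometric axioms follow from those of $e$. Your tube-lemma argument is just the standard way of making the paper's one-line continuity claim precise, and your reduction via $|e'(s,t)-e'(s_0,t_0)|\leq \underline{e}(s-s_0)+\underline{e}(t-t_0)$ is sound.
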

\begin{proof} Continuity of $e'$ is clear from continuity of the
group action and compactness of $X$. Boundedness of $e'$ follows as
$m$ is a probability measure. The $G$-invariance is clear from the
formula $e' (s,t) = \underline{e}(s-t)$. It remains to show that
$e'$ is a pseudometric. This follows easily as $e$ is a
pseudometric.
\end{proof}

\begin{lemma}[Functions inducing invariant metrics] \label{inducing}
 Let $F : G\longrightarrow [0,\infty)$ be a function on
$G$ such that $F'(t,s):= F(t-s)$ is a pseudometric. Then, $F$
satisfies $F(0) = 0$, $F(-s) = F(s)$ and $|F(s)- F(t)|\leq F(s-t)$
and $$\|F (s + \cdot) - F(t + \cdot)\|_\infty  = F(s-t)$$ for all
$s,t\in G$. If $F$ is continuous (at $t = 0$)  it is uniformly
continuous on $G$.
\end{lemma}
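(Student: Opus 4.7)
The plan is to unpack each conclusion directly from the pseudometric axioms applied to $F'$, exploiting the specific form $F'(t,s) = F(t-s)$ at each step.

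First, for the three basic identities I would argue as follows. The identity $F(0) = 0$ is immediate from $F(0) = F'(0,0) = 0$. For symmetry, I would compute both $F(-s) = F'(-s,0)$ and $F(s) = F'(0,-s)$, and then apply the symmetry axiom $F'(-s,0) = F'(0,-s)$. For the Lipschitz-type estimate $|F(s)-F(t)| \le F(s-t)$, I would apply the triangle inequality for the pseudometric $F'$ in the form $F'(s,0) \le F'(s,t) + F'(t,0)$, which rewrites as $F(s) \le F(s-t) + F(t)$; swapping the roles of $s$ and $t$ and using symmetry (just established) gives the absolute-value bound.

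Next, for the sup-norm identity, the inequality $\|F(s+\cdot) - F(t+\cdot)\|_\infty \le F(s-t)$ follows by applying the Lipschitz estimate pointwise: for every $u \in G$, $|F(s+u) - F(t+u)| \le F((s+u)-(t+u)) = F(s-t)$. For the reverse inequality, I would exhibit a point where the bound is attained; choosing $u = -t$ gives $|F(s-t) - F(0)| = F(s-t)$, forcing equality.

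Finally, for uniform continuity: if $F$ is continuous at $0$, then given $\varepsilon > 0$ there is a neighborhood $U$ of $0$ in $G$ with $F(r) < \varepsilon$ for all $r \in U$ (using $F(0) = 0$). Then whenever $s - t \in U$, the Lipschitz estimate gives $|F(s) - F(t)| \le F(s-t) < \varepsilon$, which is uniform continuity with respect to the group translation structure on $G$.

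There is no real obstacle here; the entire lemma is a sequence of short deductions from the pseudometric axioms. The only conceptual point worth emphasizing is that translation invariance of the pseudometric $F'$ (built into the definition $F'(t,s) = F(t-s)$) is what upgrades the pointwise Lipschitz estimate to an equality of sup-norms in (4) and, combined with continuity at a single point, to uniform continuity in (5).
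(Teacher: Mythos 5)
Your proof is correct and follows essentially the same route as the paper: derive the three identities from the pseudometric axioms for $F'$, get the sup-norm inequality pointwise from the Lipschitz estimate, obtain the reverse inequality by evaluating at $-t$, and deduce uniform continuity from continuity at $0$ together with the Lipschitz estimate. No gaps.
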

\begin{proof} As $F'$ is a pseudometric we infer
$F(0) = F' (0,0) = 0 \mbox{ and } F(s) = F' (s,0) = F' (0,s) =
F(-s)$ as well as
$$|F(s) - F(t)| = |F' (s,0) -F'(t,0)| \leq F' (t,s) = F(t-s).$$
for all $s,t\in G$. From this inequality we  directly obtain the
inequality $\|F (s + \cdot) - F(t + \cdot)|_\infty  \leq  F(s-t)$
for all $s,t\in G$.  The  reverse inequality $\geq$ follows by
inserting the value $-t$. Now, the last statement is clear.
\end{proof}

\begin{proposition}[Basic properties of $\underline{e}$]
Let $e$ be a continuous  pseudometric on $X$.
 The function $\underline{e}$  is bounded and satisfies $\underline{e}(0)
= 0$, $\underline{e}(s) = \underline{e}(-s)$ as well as
$|\underline{e} (s) -\underline{e}(t) |\leq \underline{e}(s-t)$ and
$$\|\underline{e}(\cdot + s) -\underline{e}(\cdot + t)\|_\infty =
\underline{e}(s-t)$$ for all $s,t\in G$.  Moreover, $\underline{e}$
is uniformly continuous.
\end{proposition}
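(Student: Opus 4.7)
The plan is to reduce everything to the preceding proposition and Lemma \ref{inducing}. The key observation is that $\underline{e}$ is precisely the restriction of $e'$ to the ``diagonal'' via the identity $\underline{e}(s) = e'(0,s)$ and, conversely, $e'(s,t) = \underline{e}(s-t)$. In particular, setting $F := \underline{e}$, the associated function $F'(s,t) = F(s-t) = \underline{e}(s-t)$ equals $e'(s,t)$, which by the preceding proposition is a pseudometric. Thus $F = \underline{e}$ fits exactly the hypothesis of Lemma \ref{inducing}.

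First I would note boundedness: since $e'$ is bounded (preceding proposition) and $\underline{e}(t) = e'(0,t)$, the function $\underline{e}$ is bounded on $G$. Next, I would apply Lemma \ref{inducing} directly to $F = \underline{e}$. This immediately yields the identities $\underline{e}(0) = 0$, $\underline{e}(-s) = \underline{e}(s)$, the triangle-type inequality $|\underline{e}(s) - \underline{e}(t)| \leq \underline{e}(s-t)$, and the supremum-norm formula $\|\underline{e}(\cdot + s) - \underline{e}(\cdot + t)\|_\infty = \underline{e}(s-t)$ for all $s,t \in G$.

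Finally, for uniform continuity, Lemma \ref{inducing} gives that it suffices to verify continuity of $\underline{e}$ at $0$. But continuity at $0$ follows from the identity $\underline{e}(t) = e'(0,t)$ together with continuity of the pseudometric $e'$ on $G \times G$, which was established in the preceding proposition. The uniform continuity on all of $G$ is then the final conclusion of Lemma \ref{inducing}.

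The main ``obstacle'' is really just the bookkeeping of recognizing $\underline{e}$ as an instance of the abstract function $F$ of Lemma \ref{inducing}; once this identification is made, no further work beyond citing the earlier results is required. There is no genuine analytic difficulty, since all the hard content (the pseudometric property of $e'$ coming from invariance of $m$, and the general functional inequalities for pseudometric-inducing functions) is already packaged in the preceding proposition and lemma.
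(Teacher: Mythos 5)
Your proposal is correct and follows essentially the same route as the paper: the paper likewise obtains boundedness and continuity from compactness of $X$ and continuity of $e$, and then applies Lemma \ref{inducing} with $F = \underline{e}$ and $F' = e'$ to get all remaining identities and the uniform continuity. The only cosmetic difference is that you route boundedness and continuity at $0$ through the already-established properties of $e'$ rather than directly through $e$, which amounts to the same underlying facts.
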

\begin{proof} As $X$ is compact and $e$ is continuous, the function
$\underline{e}$ is bounded and continuous. As for the remaining
statements we can apply the previous lemma with  $F = \underline{e}$
and $F' = e'$. \end{proof}

Of course, the functions $\underline{e}$ and $e'$ depend on $e$. So,
one may wonder how they change if $e$ is replaced by another
pseudometric.  To investigate  this we introduce the following
concept.

\begin{definition}[The relation $\prec$] Let $f$ and $g$ be functions from a set $Y$ to the complex
numbers. Then, $f $ is said to \textit{dominate} $g$ written as
$g\prec f$ if for all $\varepsilon >0$ there exists a $\delta>0$
such that $|g(y)|\leq \varepsilon$ whenever $|f(y)|\leq \delta$
holds.  If both $f$ dominates $g$ and $g$ dominates $f$ we say that
$f$ and $g$ are equivalent and  write $f\sim g$.
\end{definition}

%\begin{proposition} Let $f: Y\longrightarrow [0,\infty)$ be given.
%(a) If $g_1, \ldots, g_n$ are dominated by $f$, then so is
%$\sum_{j=1}^N c_j g_j$ for all $c_1, \ldots, c_n\in\CC$.
%(b) If $g_n$, $n\in\NN$, are dominated by $f$ and $g_n$ converge
%uniformly to $g$, then $g$ is also dominated by $f$.
%\end{proposition}
%\begin{proof} This is easy.
%\end{proof}

The following statement is a rather  direct consequence of uniform
continuity of continuous functions on compact sets.

\begin{proposition} Let $e$ be a pseudometric on  the compact $X$.
Let $d$ be a metric on $X$ inducing the topology.  Then, $e$ is
continuous if and only if it is dominated by $d$.
\end{proposition}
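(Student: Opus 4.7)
The plan is to treat the two implications separately, using compactness of $X \times X$ for one direction and the pseudometric triangle inequality for the other. Note that here $\prec$ is applied to the functions $e$ and $d$ regarded as functions on the set $Y = X \times X$.

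For the forward direction, I would argue by contradiction. Suppose $e$ is continuous but $e \not\prec d$. Then there exist $\varepsilon > 0$ and sequences $(x_n), (y_n)$ in $X$ with $d(x_n, y_n) \to 0$ and $e(x_n, y_n) > \varepsilon$ for all $n$. Since $X$ is compact (and metrizable), pass to a subsequence so that $x_n \to x$; then $d(y_n, x) \leq d(y_n, x_n) + d(x_n, x) \to 0$, so also $y_n \to x$. Continuity of $e$ on $X \times X$ and the property $e(x,x) = 0$ then force $e(x_n, y_n) \to 0$, a contradiction.

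For the reverse direction, assume $e \prec d$ and let $(x_n, y_n) \to (x,y)$ in $X \times X$. Applying the triangle inequality twice gives the standard bound
\[
|e(x_n, y_n) - e(x,y)| \leq e(x_n, x) + e(y_n, y).
\]
By domination, $d(x_n, x) \to 0$ and $d(y_n, y) \to 0$ imply $e(x_n, x) \to 0$ and $e(y_n, y) \to 0$, yielding $e(x_n, y_n) \to e(x,y)$; hence $e$ is continuous.

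There is no real obstacle here: the forward direction is a compactness/sequential argument that exploits $e(x,x) = 0$, while the backward direction is a one-line application of the pseudometric triangle inequality reducing the problem to the diagonal. If $G$ is not metrizable for sequences in $X$ (which is not an issue since $X$ itself is assumed metrizable throughout the paper), one could replace sequences by nets, but otherwise the proof is entirely routine.
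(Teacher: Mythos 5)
Your proof is correct; the paper itself gives no written proof, merely noting that the statement ``is a rather direct consequence of uniform continuity of continuous functions on compact sets,'' and your sequential-compactness contradiction for the forward direction is exactly the standard unpacking of that remark (uniform continuity of $e$ on the compact $X\times X$, evaluated at points of the form $(x,y)$ versus $(y,y)$), while your triangle-inequality reduction to the diagonal handles the converse. So this is essentially the intended argument, just written out in full.
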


By the previous proposition, two metrics $d$ and $e$ on the compact
$X$ giving the topology are equivalent.

\begin{lemma}\label{thelemma}
Let $(X,G,m)$ be a dynamical system. Let $e_1$ and $e_2$ be
continuous pseudometrics on $X$ with $e_1\prec e_2$. Then,
$$\underline{e_1}\prec \underline{e_2} \mbox{ and }  e_1' \prec  e_2'.$$
\end{lemma}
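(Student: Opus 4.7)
The plan is to prove $\underline{e_1}\prec \underline{e_2}$ pointwise via a standard split-the-integral argument combined with Markov's inequality; the analogous statement for $e_1'$ and $e_2'$ will then follow immediately from the identity $e_i'(s,t)=\underline{e_i}(s-t)$ established earlier.

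More concretely, fix $\varepsilon>0$. Since $e_1\prec e_2$ as functions on $X\times X$, there exists $\delta_0>0$ such that $e_1(x,y)\leq \varepsilon/2$ whenever $e_2(x,y)\leq \delta_0$. Also, because $X$ is compact and $e_1$ is continuous, the constant $M:=\|e_1\|_\infty$ is finite; we may assume $M>0$ (otherwise $\underline{e_1}\equiv 0$ and the statement is trivial). For $t\in G$ I will split the base space as
\[
A_t:=\{x\in X\with e_2(x,tx)\leq \delta_0\},\qquad B_t:=X\setminus A_t,
\]
and estimate
\[
\underline{e_1}(t)=\int_{A_t} e_1(x,tx)\,dm(x)+\int_{B_t} e_1(x,tx)\,dm(x)\leq \tfrac{\varepsilon}{2}+M\,m(B_t).
\]

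The key observation is Markov's inequality applied to $e_2(\cdot,t\cdot)$: since $e_2(x,tx)>\delta_0$ on $B_t$, we have
\[
m(B_t)\leq \frac{1}{\delta_0}\int_X e_2(x,tx)\,dm(x)=\frac{\underline{e_2}(t)}{\delta_0}.
\]
Setting $\delta:=\varepsilon\,\delta_0/(2M)$, we obtain $\underline{e_1}(t)\leq \varepsilon/2+\varepsilon/2=\varepsilon$ whenever $\underline{e_2}(t)\leq \delta$, which is exactly $\underline{e_1}\prec \underline{e_2}$.

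For the second assertion, by the identity $e_i'(s,t)=\underline{e_i}(s-t)$ from the preceding discussion, the implication $\underline{e_2}(s-t)\leq \delta\Rightarrow \underline{e_1}(s-t)\leq \varepsilon$ immediately translates to $e_2'(s,t)\leq \delta\Rightarrow e_1'(s,t)\leq \varepsilon$, giving $e_1'\prec e_2'$. The only mildly nontrivial step is the Markov bound; everything else is bookkeeping around the definition of $\prec$ and the boundedness of $e_1$ that compactness of $X$ supplies.
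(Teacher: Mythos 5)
Your proof is correct and follows essentially the same route as the paper's: split $X$ according to whether $e_2(x,tx)$ is small, use the domination $e_1\prec e_2$ on the good set, and control the measure of the bad set by a Markov/Chebyshev bound in terms of $\underline{e_2}(t)$, with the $e'$ statement reduced to the $\underline{e}$ statement via $e_i'(s,t)=\underline{e_i}(s-t)$. The only difference is cosmetic: the paper normalizes $e_1,e_2\leq 1$ and takes $\delta=\varepsilon_1^2$, whereas you keep $\|e_1\|_\infty$ explicit and take $\delta=\varepsilon\delta_0/(2M)$.
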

\begin{proof} It suffices to show the statement for
$\underline{e_1}$ and $\underline{e_2}$.   We have to show that  for
each $\varepsilon
>0$ there exists a $\delta>0$ such that $\underline{e_2} (t)  <\delta$
(for  $t\in G$)  implies $\underline{e_1} (t) <\varepsilon$. So, let
$\varepsilon >0$ be given. Without loss of generality we can assume
$e_1,e_2\leq 1$.

By  $e_1\prec e_2$, there exists  $\varepsilon_1>0$ with
$$e_2(x,y)\leq \varepsilon_1\Longrightarrow e_1(x,y) \leq
\frac{\varepsilon}{2}.$$ Without loss of generality we can assume
$$ \varepsilon_1 \leq \frac{\varepsilon}{2}. $$
Set $\delta:=\varepsilon_1^2$ and consider a $t\in G$ with
$\underline{e_2} (t) \leq \delta$. Setting
$$M:=\{ x : e_2(x,tx)\geq \varepsilon_1\}$$
we then find
$$\varepsilon_1 m(M)\leq \int_X e_2 (x,tx) dm(x) = \underline{e_2}(t) <
\varepsilon_1^2.$$ This gives
$$m(M) \leq \varepsilon_1.$$
By construction we have $e_2 (x,tx)< \varepsilon_1$ and hence $e_1
(x,tx) \leq \frac{\varepsilon}{2}$ for $x\in X\setminus M$. Given
this a short computation shows
\begin{eqnarray*}
\underline{e_1}(t) &=& \int_X e_1 (x,tx) dm(x)\\
&=& \int_M e_1 (x,tx) dm(x) + \int_{X\setminus M} e_1 (x,tx) dm(x)\\
&\leq & \|e_1\|_\infty m(M) + \frac{\varepsilon}{2} m(X\setminus M)\\
&\leq & \varepsilon_1 + \frac{\varepsilon}{2}\\
&\leq & \varepsilon.
\end{eqnarray*}
This finishes the proof.
\end{proof}

For us certain (pseudo)metrics will be of special interest: To each
continuous $f : X\longrightarrow \CC$ we associate the pseudometric
$$e_f : X\times X\longrightarrow [0,\infty), e_f (x,y) := |f(x) -
f(y)|.$$ As $f$ is continuous, so is $e_f$. In particular, $e_f$ is
dominated by the metric $d$.  Let now for $n\in\NN$ functions   $f_n
\in C(X)$ and $c_n \geq 0$ with $\sum c_n \|f_n\|_\infty < \infty$
be given. Then,
$$e_{(f_n), (c_n)} :=\sum_n c_n e_{f_n}$$
is a continuous pseudometric. To an $f \in C(X)$   we can also
associate the function
$$F_f : G\longrightarrow [0,\infty), F_f
 (t):=\|f - T_t f\|.$$
To  $f_n\in C(X) $ and $c_n \geq 0$ with $\sum c_n \|f\|_\infty$ we
can moreover associate the function
 $$F_{(f_n), (c_n)} :=\sum_n c_n F_{f_n}.$$

\begin{proposition}\label{thecorollary}

(a) Let $f$ be a continuous function on $X$. Then,  $\underline{e}_f
\prec \underline{d}$.

(b) Let $f$ be a continuous function on $X$. Then, $ F_f  \leq
\sqrt{2 \|f\|_\infty} \cdot  \sqrt{\underline{e}_f}$ and
 $ \underline{e}_{f} \leq F_f$ hold. In particular, $F_f\sim
  \underline{e}_f$ and $F_f\prec \underline{d}$.

(c) Let continuous functions  $f_n$ on $X$ and $c_n >  0$, $n\in
\NN$, with $\sum_n c_n \|f_n\| <\infty$ be given such that the
$(f_n)$ separate the points of $X$. Then,
$$\underline{e}_{(f_n), (c_n)} \sim F_{(f_n), (c_n)} \sim \underline{d}.$$
\end{proposition}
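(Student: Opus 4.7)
The plan is to prove the two equivalences separately. First, I would establish $\underline{e}_{(f_n),(c_n)} \sim \underline{d}$ by showing that the pseudometric $e := \sum_n c_n e_{f_n}$ is in fact a continuous metric on $X$ inducing the same topology as $d$, so that the domination lemma applies. Because $e_{f_n}(x,y)\leq 2\|f_n\|_\infty$, the Weierstrass $M$-test (using $\sum_n c_n \|f_n\|_\infty<\infty$) shows that $\sum_n c_n e_{f_n}$ converges uniformly on $X\times X$, so $e$ is continuous. Since the $f_n$ separate points and all $c_n>0$, the condition $e(x,y)=0$ forces $f_n(x)=f_n(y)$ for every $n$ and hence $x=y$; thus $e$ is a continuous metric. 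The proposition preceding Lemma \ref{thelemma} then gives $e\prec d$. For the reverse, the identity $(X,d)\to(X,e)$ is a continuous bijection from a compact space onto a Hausdorff space and hence a homeomorphism; a standard compactness argument applied to $\{(x,y) : d(x,y)\geq\varepsilon\}$ yields $d\prec e$. Applying Lemma \ref{thelemma} in both directions now produces $\underline{e}\sim\underline{d}$.

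For $\underline{e}_{(f_n),(c_n)} \sim F_{(f_n),(c_n)}$, write $F := F_{(f_n),(c_n)}$. The bound $\underline{e}_{f_n}\leq F_{f_n}$ from part (b), summed with the weights $c_n$, gives $\underline{e}\leq F$ pointwise and hence $\underline{e}\prec F$ is free. For the converse, fix $\varepsilon>0$. Using $F_{f_n}\leq 2\|f_n\|_\infty$ together with $\sum_n c_n\|f_n\|_\infty<\infty$, choose $N$ with $\sum_{n>N}2c_n\|f_n\|_\infty<\varepsilon/2$. For $n\leq N$ the nonlinear inequality of part (b), combined with the trivial estimate $c_n\underline{e}_{f_n}(t)\leq\underline{e}(t)$, gives
$$c_n F_{f_n}(t)\;\leq\;c_n\sqrt{2\|f_n\|_\infty\,\underline{e}_{f_n}(t)}\;\leq\;\sqrt{2c_n\|f_n\|_\infty\,\underline{e}(t)}.$$
Summing over $n\leq N$ and choosing $\delta>0$ so small that $\sqrt{\delta}\sum_{n\leq N}\sqrt{2c_n\|f_n\|_\infty}<\varepsilon/2$ yields $F(t)<\varepsilon$ whenever $\underline{e}(t)<\delta$, establishing $F\prec\underline{e}$.

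The main technical obstacle is the direction $F\prec\underline{e}$: the infinite sum defining $F$ forces one to split the estimate at a finite $N$, using the square-root bound of part (b) to control the initial segment (where smallness of $\underline{e}$ really does force smallness of each $F_{f_n}$) and absorbing the tail via absolute summability of the $c_n\|f_n\|_\infty$. The first equivalence, by contrast, reduces cleanly to the standard fact that any two continuous metrics inducing the same topology on a compact space are uniformly equivalent.
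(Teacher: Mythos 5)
Your argument covers only part (c): the bounds from part (b) ($\underline{e}_{f_n}\leq F_{f_n}$ and $F_{f_n}\leq\sqrt{2\|f_n\|_\infty}\,\sqrt{\underline{e}_{f_n}}$) are invoked without proof, and part (a) is not addressed at all. These missing pieces are short but they are part of the statement: (a) is the observation that $e_f$ is a continuous pseudometric, hence dominated by $d$, so Lemma \ref{thelemma} applies; the first inequality in (b) follows from $|f(x)-f(tx)|^2\leq 2\|f\|_\infty\,|f(x)-f(tx)|$ integrated over $X$, and the second from $\int_X|g|\,dm\leq\|g\|$ (Cauchy--Schwarz, $m$ being a probability measure). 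You should supply them.

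Within part (c) your proof is correct. The equivalence $\underline{e}_{(f_n),(c_n)}\sim\underline{d}$ is handled exactly as in the paper: $e=\sum_nc_ne_{f_n}$ is a continuous metric on the compact $X$, hence generates the topology, hence is equivalent to $d$, and Lemma \ref{thelemma} transfers this to the underlined functions; your detour through the continuous-bijection-from-a-compact-space argument is just a more explicit version of the same point. For $F_{(f_n),(c_n)}\prec\underline{e}_{(f_n),(c_n)}$ the paper is slicker: writing $c_nF_{f_n}\leq\sqrt{2c_n\|f_n\|_\infty}\cdot\sqrt{c_n\underline{e}_{f_n}}$ and applying the Cauchy--Schwarz inequality to the sum over all $n$ at once yields the single global estimate
$$F_{(f_n),(c_n)}\leq\Bigl(2\sum_nc_n\|f_n\|_\infty\Bigr)^{1/2}\sqrt{\underline{e}_{(f_n),(c_n)}},$$
from which the domination is immediate with no tail-splitting. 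Your $\varepsilon/2$-decomposition at a finite $N$ is a valid substitute and reaches the same conclusion; it is merely longer, though it is the sort of argument that would survive in situations where no uniform square-root bound over the whole sum is available.
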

\begin{proof}
(a)  We have already discussed that $e_f$ is continuous. Hence, it
is dominated by $d$. Thus, the previous  lemma gives
$\underline{e}_f \prec \underline{d}$.

\smallskip

(b) To show the bound on $F_f$ we compute
\begin{eqnarray*}
F_f (t) &=&\|f -  T_t f\| \\
&=&\left(\int |f(x) - f(tx)|^2 dm(x)\right)^{1/2}\\
&\leq & \sqrt{2  \|f\|_\infty} \left(\int |f(x) - f(tx)| dm(x)\right)^{1/2}\\
&=&\sqrt{2  \|f\|_\infty} \cdot \sqrt{\underline{e}_{f} (t)}.
\end{eqnarray*}

To show the bound on $\underline{e}_f$,  we note that  $m$ is a
probability measure, and hence  Cauchy-Schwarz inequality gives
$\int_X |g|dm \leq \|g\|$ for all $g$ in $L^2 (X,m)$. Thus, we can
estimate
\begin{eqnarray*}
\underline{e}_f (t) &=& \int_X e_f (x,tx)dm (x)\\
&=& \int |f(x) - f(tx)| d m (x)\\
&\leq &\|f -  T_t f\|\\
&=& F_f(t)
\end{eqnarray*}
for each $t\in G$.

The preceding two  bounds give $F_f \sim \underline{e}_f$. Invoking
(a), we then also infer $ F_f \prec \underline{d}$.

\smallskip

(c) From (b) and the summability  condition on the sequence $(c_n)$
we directly infer $\underline{e}_{(f_n), (c_n)}\leq F_{(f_n),
(c_n)}$ as well as
$$F_{(f_n), (c_n)} \leq \sum_n c_n \sqrt{ 2\|f_n\|_\infty}
\sqrt{\underline{e}_{f_n}} \leq \left(2\sum_n c_n
\|f_n\|_\infty\right)^{1/2} \sqrt{ \underline{e}_{(f_n), (c_n)}}.$$
This gives
$$\underline{e}_{(f_n), (c_n)}\sim F_{(f_n), (c_n)}.$$

\smallskip

 From (a) and the summability condition on the sequence $(c_n)$  we also easily find
$\underline{e}_{(f_n), (c_n)}  \prec \underline{d}$. It remains to
show $\underline{d} \prec\underline{e}_{(f_n), (c_n)}$. Now, by the
assumptions the function $e:= e_{(f_n), (c_n)}$  is a continuous
metric. As $X$ is a compact Hausdorff space this metric must then
generate the topology as well. Thus, $d$ is continuous with respect
to the metric $e$ and vice versa. Thus, $d$ and $e$ are equivalent.
Thus, the previous lemma  gives $\underline{d} \sim \underline{e}$.
 This finishes the proof.
\end{proof}

\section{Almost periodicity and $\underline{d}$}\label{Almost}

\bigskip

Recall that a subset $\mathcal{T}$ of $G$ is called
\textit{relatively dense} if  there exists a compact $K\subset G$
with Minkowski sum
$$\mathcal{T} + K :=\{ \tau + k  : \tau\in \mathcal{T}, k\in K\}$$
equal to $G$.  A uniformly continuous function $F : G\longrightarrow
\CC$ is called \textit{almost periodic (in the sense of  Bohr)} if
for any $\varepsilon
>0$  the set
$$ \{ t\in G : \|F(t + \cdot) - F\|_\infty \leq \varepsilon\}$$
is relatively dense. This is the case if and only if the
\textit{hull of $F$} defined by
$$\TT (F):=\overline{\{ F(t+ \cdot) : t\in G\}}^{\|\cdot\|_\infty}$$
is compact. In this case  the hull has a unique group structure
making it into  a topological group such that
$$j: G\longrightarrow \TT (F), \;  t\mapsto F(t + \cdot),$$
is a continuous group homomorphism. Clearly, the map $j$ has dense
range. Hence, the group $\TT (F)$ must be abelian. Moreover,  the
dual map
$$\widehat{\TT (F)}\longrightarrow \widehat{G}, \gamma \mapsto
\gamma \circ j,$$ is injective. Hence, any element $\gamma\in
\widehat{\TT(F)}$ can be considered as an element of $\widehat{G}$
and this is how we will think about elements of $\widehat{\TT(F)}$.

As $\TT(F)$ is a compact group it carries a unique normalized
invariant measure $m_{\TT(F)}$  and the elements of $\widehat{\TT
(F)}$ form an orthonormal basis in the Hilbert space $L^2 (\TT(F),
m_{\TT(F)})$. Consequently any continuous function $h$ on $\TT(F)$
can then be expanded uniquely in a \textit{Fourier series}
$$ h = \sum_{\gamma\in \widehat{\TT(F)}} c_\gamma^h  \gamma, $$
where the sum converges in the $L^2$-sense. Whenever $H$ is a
continuous  function on $G$ such that there exists a (necessarily
unique) continuous $h$ on $\TT(F)$ with $H = h \circ j$ we call
$$\{\gamma \circ j : c_\gamma^h\neq 0\}\subset \widehat{G}$$
the \textit{set of frequencies of $H$}. Clearly, one such function
is given by  $H =  \delta\circ j$ with the function $\delta : \TT
(F)\longrightarrow \CC, \delta (E) = E(0).$ In this case the group
generated by the frequencies of $H$ is just  $\widehat{\TT (F)}$.

\smallskip

We will be interested in proving almost periodicity of functions
such as $\underline{e}$ for a continuous pseudometric $e$ on $X$,
$F_f$ for $f\in C(X)$ and $F_{(c_n), (f_n)}$ for $f_n \in C(X)$ and
$c_n\geq 0$ with $\sum_n c_n \|f_n\| <\infty$.  All of these  are
functions  $F :G\longrightarrow [0,\infty)$ such that $F'$ with $F'
(s,t):= F(s-t)$  is a continuous pseudometric on $G$. Hence, they
are  continuous $F: G\longrightarrow [0,\infty)$  with $F(0) =0$ and
\begin{eqnarray}\label{condition}
\|F (s + \cdot) - F (t+ \cdot)\|_\infty = F (s-t)
\end{eqnarray} for
all $s,t \in G$ (compare Lemma \ref{inducing}). For such functions
we clearly have the following criterion for almost periodicity.

\begin{lemma} \label{observation}
A continuous  $F: G\longrightarrow [0,\infty)$  with $F(0) =0$ and $
\|F (s + \cdot) - F_j(t+ \cdot)\|_\infty = F (s-t)$ for all $s,t \in
G$ is almost periodic if  and only if for every $\varepsilon
>0$  the set
\begin{eqnarray}\label{settt}\{ \tau \in G : |F(\tau)|\leq \varepsilon \}
\end{eqnarray}
 is relatively dense.
\end{lemma}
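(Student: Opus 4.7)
The plan is to observe that the hypothesis collapses the two sets in question to the same set, so the equivalence is essentially immediate once uniform continuity is in hand.

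First I would verify that such an $F$ is automatically uniformly continuous, which is needed because the definition of Bohr almost periodicity presupposes uniform continuity. To do this I would check that $F'(s,t) := F(s-t)$ is a pseudometric on $G$: symmetry follows by swapping $s,t$ in the identity $\|F(s+\cdot) - F(t+\cdot)\|_\infty = F(s-t)$ (the left side is unchanged, forcing $F(s-t) = F(t-s)$); vanishing on the diagonal follows by setting $s=t$ and using $F(0)=0$; and the triangle inequality follows from
\[
F(a) = \|F(a+\cdot) - F\|_\infty \leq \|F(a+\cdot) - F(b+\cdot)\|_\infty + \|F(b+\cdot) - F\|_\infty = F(a-b) + F(b).
\]
With $F'$ a continuous pseudometric, Lemma \ref{inducing} applies and yields uniform continuity of $F$.

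Next I would plug $t = 0$ into the identity $\|F(s+\cdot) - F(t+\cdot)\|_\infty = F(s-t)$. Using $F(0) = 0$ (so that $F(0 + \cdot) = F$), this gives the key equality
\[
\|F(s + \cdot) - F\|_\infty = F(s) \quad \text{for all } s \in G.
\]
Since $F \geq 0$, we have $F(s) = |F(s)|$, so for every $\varepsilon > 0$
\[
\{t \in G : \|F(t + \cdot) - F\|_\infty \leq \varepsilon\} = \{\tau \in G : |F(\tau)| \leq \varepsilon\}.
\]
The left-hand set is relatively dense for every $\varepsilon > 0$ precisely when $F$ is almost periodic in the sense of Bohr (given the uniform continuity already established), while the right-hand side is the set (\ref{settt}). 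This yields both directions of the claimed equivalence simultaneously.

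There is no real obstacle here: the entire content is the algebraic identity obtained by specializing one variable to $0$ in the given hypothesis. The only thing to be careful about is not to forget the uniform continuity requirement built into the definition of Bohr almost periodicity, which is why I would dispatch that via Lemma \ref{inducing} at the outset.
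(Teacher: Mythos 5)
Your proof is correct and follows exactly the route the paper intends: the paper offers no written proof (it asserts the lemma is "clear" for functions satisfying \eqref{condition}), and the content is precisely your observation that setting $t=0$ in the identity gives $\|F(s+\cdot)-F\|_\infty = F(s) = |F(s)|$, so the defining set for Bohr almost periodicity coincides with the set \eqref{settt}. Your additional verification that $F'$ is a pseudometric (hence $F$ is uniformly continuous via Lemma \ref{inducing}) is a worthwhile detail the paper handles implicitly in the paragraph preceding the lemma.
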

Accordingly, proving relative denseness of sets as in \eqref{settt}
 will be our main tool in dealing with almost-periodicity.

As relative denseness of sets as in \eqref{settt} is clearly
preserved under $\prec$ we obtain the following consequence.

\begin{proposition} Let $F_j: G\longrightarrow [0,\infty)$, $j\in\{1,2\}$,  be
continuous   with $F_j (0) =0$ and
$$ \|F_j(s + \cdot) - F_j(t+ \cdot)\|_\infty = F_j(s-t)$$
for all $s,t \in G$ and   $F_1 \prec F_2$. Then, $F_1$ is almost
periodic if $F_2$ is almost periodic.
\end{proposition}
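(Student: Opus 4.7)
The plan is to invoke Lemma \ref{observation} in both directions and use the definition of $\prec$ as a bridge between the two level-set conditions. Fix $\varepsilon > 0$. By Lemma \ref{observation}, proving almost periodicity of $F_1$ reduces to showing that the set
$$\mathcal{T}_1(\varepsilon) := \{\tau \in G : F_1(\tau) \leq \varepsilon\}$$
is relatively dense in $G$.

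Now I would use the hypothesis $F_1 \prec F_2$: by definition, there exists $\delta = \delta(\varepsilon) > 0$ such that $F_2(\tau) \leq \delta$ implies $F_1(\tau) \leq \varepsilon$ for every $\tau \in G$. In other words,
$$\mathcal{T}_2(\delta) := \{\tau \in G : F_2(\tau) \leq \delta\} \subseteq \mathcal{T}_1(\varepsilon).$$
Since $F_2$ is almost periodic and satisfies $F_2(0) = 0$ together with the isometry identity $\|F_2(s+\cdot) - F_2(t+\cdot)\|_\infty = F_2(s-t)$, the ``if'' direction of Lemma \ref{observation} guarantees that $\mathcal{T}_2(\delta)$ is relatively dense. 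Any superset of a relatively dense set is itself relatively dense (one uses the same compact set $K$ in the definition), so $\mathcal{T}_1(\varepsilon)$ is relatively dense as well.

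As $\varepsilon > 0$ was arbitrary, we may apply Lemma \ref{observation} once more, this time to $F_1$, to conclude almost periodicity of $F_1$. The only point to double-check is that the criterion in Lemma \ref{observation} applies to $F_1$ at all, i.e.\ that $F_1$ is uniformly continuous and satisfies the isometry identity; both are built into the hypotheses on $F_1$ (continuity plus the identity imply uniform continuity, as observed in Lemma \ref{inducing}). There is no real obstacle here: the proof is essentially a bookkeeping exercise showing that the relation $\prec$ translates a relatively-dense-level-set condition on $F_2$ into the same condition on $F_1$. The only mild subtlety is noting that supersets of relatively dense sets remain relatively dense, which is immediate from the definition via the Minkowski sum with a compact $K$.
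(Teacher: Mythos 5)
Your proof is correct and follows essentially the same route as the paper: both arguments combine Lemma \ref{observation} with the observation that $F_1 \prec F_2$ transfers relative denseness of the level sets $\{\tau : F_2(\tau)\leq \delta\}$ to the level sets $\{\tau : F_1(\tau)\leq \varepsilon\}$. Your version merely spells out the bookkeeping (the inclusion of level sets and the stability of relative denseness under passing to supersets) that the paper leaves implicit.
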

\begin{proof} By $F_1 \prec F_2$ relative denseness of
$\{ \tau \in G : |F_2(\tau)|\leq \varepsilon \}$ for each
$\varepsilon >0$ implies relative denseness of $\{ \tau \in G :
|F_1(\tau)|\leq \delta \}$ for each $\delta >0$. Now, the  proof
follows from Lemma \ref{observation}.
\end{proof}

The following lemma gathers various equivalent versions of almost
periodicity of $\underline{d}$. Our main result will be a rather
direct consequence of this lemma. The equivalence between (v), (v')
and (vi) is well-known. As we will need this equivalence, we include
a discussion for completeness reasons.

\begin{lemma}\label{characterization-ap} Let $(X,G,m)$ be a dynamical system.
Then, the  following assertions are equivalent:

\begin{itemize}

\item[(i)] The function $\underline{d}$ is almost periodic.

\item[(ii)] For each $f\in C(X)$ the function $\underline{e}_f$
is almost periodic.

\item[(ii)'] For each  $f\in C(X)$ the function $F_f$ is almost
periodic.

\item[(iii)] The set of $f\in C(X)$ for which
$\underline{e}_f$ is almost periodic separates the points of $X$.

\item[(iii)'] The  set of  $f\in C(X)$ for which
$F_f$ is almost periodic separates the points of $X$.

\item[(iv)] The function $\underline{e}_{(f_n), (c_n)}$ is almost periodic for
one (each) set of  functions  $f_n$ in $C(X)$ and $c_n > 0$, $n\in
\NN$, with $\sum_n c_n \|f_n\| <\infty$  such that the $(f_n)$
separate the points of $X$.

\item[(iv)'] The function $F_{(f_n), (c_n)}$ is almost periodic.
for one (each) set of functions  $f_n$ in $C(X)$  and $c_n > 0$,
$n\in \NN$, with $\sum_n c_n \|f_n\| <\infty$  such that the $(f_n)$
separate the points of $X$.

\item[(v)] For any $f\in C(X)$  the function $G\longrightarrow
\CC, t\mapsto \langle f, T_t f\rangle$ is almost periodic.

\item[(v)'] For any $f\in L^2 (X,m)$ the function $G\longrightarrow
\CC, t\mapsto \langle f, T_t f\rangle$ is almost periodic.

\item[(vi)] The set of $f\in C(X)$ for which  $G\longrightarrow
\CC, t\mapsto \langle f, T_t f\rangle$ is almost periodic separates
the points of $X$.

\end{itemize}

If one of these equivalent conditions holds the group generated by
the frequencies of $\underline{d}$ is the same as the group
generated by the  frequencies of all  functions of the form
$G\longrightarrow \CC, \;  t\mapsto \langle f, T_t f\rangle$ for
$f\in L^2(X,m)$.
\end{lemma}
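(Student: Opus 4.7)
The plan is to establish (i)--(iv') as equivalent using only Proposition~\ref{thecorollary} and the preservation-under-$\prec$ proposition above, then fold in (v), (v'), (vi) via the identity $F_f(t)^2 = 2\|f\|^2 - 2\,\mathrm{Re}\,\langle f, T_t f\rangle$ and a factorization of the orbit map through $\TT(\underline{d})$, and finally extract the frequency statement by a Pontryagin duality argument on $\TT(\underline{d})$.

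For the first block: (i) $\Rightarrow$ (ii), (ii') follows from $F_f \sim \underline{e}_f \prec \underline{d}$ (Proposition~\ref{thecorollary}(a)--(b)) combined with the preservation proposition. The passages (ii) $\Rightarrow$ (iii), (ii') $\Rightarrow$ (iii') are trivial. For (iii) $\Rightarrow$ (iv) and (iii') $\Rightarrow$ (iv'), I would extract a countable point-separating sequence $(f_n)$ from the given separating family---using that $X \times X \setminus \Delta$ is Lindel\"of, since $X$ is compact metric---and choose weights $c_n > 0$ with $\sum_n c_n\|f_n\|_\infty < \infty$. Since the partial sums $\sum_{n \leq N} c_n \underline{e}_{f_n}$ are almost periodic (finite sums of almost periodic functions) and converge uniformly, the limit $\underline{e}_{(f_n),(c_n)}$ is almost periodic; likewise for $F_{(f_n),(c_n)}$. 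Finally (iv) or (iv') $\Rightarrow$ (i) is a direct consequence of Proposition~\ref{thecorollary}(c), which gives $\underline{d} \sim \underline{e}_{(f_n),(c_n)} \sim F_{(f_n),(c_n)}$, combined again with the preservation proposition.

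To include the inner-product conditions, the easy direction (vi) $\Rightarrow$ (iii') comes from the identity $F_f(t)^2 = 2\|f\|^2 - 2\,\mathrm{Re}\,\langle f, T_t f\rangle$: if $\langle f, T_t f\rangle$ is almost periodic, then so is $F_f^2$, and composition with the square root (uniformly continuous on bounded intervals of $[0,\infty)$) yields $F_f$ almost periodic. For (i) $\Rightarrow$ (v'), I would combine $\|T_s f - T_t f\| = F_f(s-t)$ with $F_f \prec \underline{d}$: whenever $j(s_n)$ is Cauchy in $\TT(\underline{d})$, the sequence $T_{s_n} f$ is Cauchy in $L^2(X,m)$, so the orbit map extends to a continuous map $\tilde\tau_f : \TT(\underline{d}) \to L^2(X,m)$ for each $f \in C(X)$. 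Hence $\langle f, T_t f\rangle = \langle f, \tilde\tau_f(j(t))\rangle$ factors through the compact group $\TT(\underline{d})$ and is almost periodic. Density of $C(X)$ in $L^2(X,m)$ together with the uniform estimate $|\langle f, T_t f\rangle - \langle g, T_t g\rangle| \leq (\|f\| + \|g\|)\,\|f - g\|$ lifts this to all $f \in L^2(X,m)$ via uniform limits of almost periodic functions. The remaining implications (v') $\Rightarrow$ (v) $\Rightarrow$ (vi) are trivial.

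The main obstacle, and final piece, is the frequency statement. Let $G_1$ denote the group generated by the frequencies of $\underline{d}$, identified with $\widehat{\TT(\underline{d})} \subset \widehat{G}$, and let $G_2$ be the group generated by the frequencies of all $A_f(t) := \langle f, T_t f\rangle$ as $f$ ranges over $L^2(X,m)$. The inclusion $G_2 \subset G_1$ is immediate from the factorization above: each $A_f$ descends to a continuous function on $\TT(\underline{d})$, so its Fourier expansion is supported in $\widehat{\TT(\underline{d})} = G_1$. For $G_1 \subset G_2$, by Pontryagin duality in the discrete group $\widehat{\TT(\underline{d})}$ it suffices to show that the annihilator $G_2^\perp := \{E \in \TT(\underline{d}) : \chi(E) = 1 \text{ for all } \chi \in G_2\}$ is trivial. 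Given $E \in G_2^\perp$ and a net $t_n \in G$ with $j(t_n) \to E$, the defining property of $G_2^\perp$ forces the descent $\tilde A_f$ to satisfy $\tilde A_f(E) = \tilde A_f(e) = \|f\|^2$ for every $f \in L^2(X,m)$, i.e., $\langle f, T_{t_n} f\rangle \to \|f\|^2$. The identity $\|T_{t_n} f - f\|^2 = 2\|f\|^2 - 2\,\mathrm{Re}\,\langle f, T_{t_n} f\rangle$ then forces $T_{t_n} f \to f$ in norm for every $f \in L^2(X,m)$, so $F_f(t_n) \to 0$ and $\underline{e}_f(t_n) \leq F_f(t_n) \to 0$ for every $f \in C(X)$. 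Feeding a countable point-separating $(g_k) \subset C(X)$ with $\sum_k c_k\|g_k\|_\infty < \infty$ into dominated convergence yields $\underline{e}_{(g_k),(c_k)}(t_n) \to 0$, and Proposition~\ref{thecorollary}(c) upgrades this to $\underline{d}(t_n) \to 0$. Hence $E = \lim j(t_n) = e$ in $\TT(\underline{d})$, $G_2^\perp$ is trivial, and $G_1 = G_2$ as claimed.
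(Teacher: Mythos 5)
Your proof is correct, and for the block (i)--(iv)$'$ together with (vi)$\Rightarrow$(iii)$'$ it follows the paper: chain the equivalences through Proposition~\ref{thecorollary} and the preservation-under-$\prec$ proposition, extract a countable separating subfamily by compactness, and use $F_f^2(t)=2\|f\|^2-2\,\mathrm{Re}\,\langle f,T_tf\rangle$. You diverge in two places. For (i)$\Rightarrow$(v)$'$ the paper argues by polarization: (ii)$'$ gives almost periodicity of $F_f$ for real-valued $f$, the identity transfers this to $\langle f,T_tf\rangle$, polarization yields $\langle f,T_tg\rangle$ for real $f,g$, and density of $C(X)$ in $L^2(X,m)$ finishes; you instead factor the orbit map $t\mapsto T_tf$ through $\TT(\underline{d})$ via $\|T_sf-T_tf\|=F_f(s-t)$ and $F_f\prec\underline{d}$, which handles complex $f$ in one stroke and is essentially the lifting argument the paper only deploys later for the frequency statement, so your route unifies the two steps. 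For the hard inclusion of the frequency statement the paper takes $F_{(f_n),(c_n)}\sim\underline{d}$, concludes $\TT(F_{(f_n),(c_n)})=\TT(\underline{d})$, and tracks frequencies through the series, whereas you show the annihilator $G_2^\perp\subset\TT(\underline{d})$ is trivial and invoke Pontryagin duality in the discrete group $\widehat{\TT(\underline{d})}$. Your version gives a cleaner structural fact (any point of $\TT(\underline{d})$ at which every $\langle f,T_{\cdot}f\rangle$ attains the value $\|f\|^2$ is the identity), at the cost of two facts left implicit but standard: that the group generated by the frequencies of $\underline{d}$ is all of $\widehat{\TT(\underline{d})}$ (the paper's remark on $\delta\circ j$), and that a continuous function on a compact group whose Fourier series is supported in a subgroup $\Lambda$ is constant on cosets of $\Lambda^\perp$ --- needed to get $\tilde A_f(E)=\tilde A_f(e)$ pointwise, since the Fourier expansion a priori converges only in $L^2$.
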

\begin{proof} We first discuss the equivalence of the various
statements:  Note that the  functions in question in statements (i)
to (iv)'  are continuous, vanish at the neutral element of $G$  and
satisfy the assumption \eqref{condition}. Thus, we can invoke the
previous proposition to study their almost-periodicity.  Given this,
the equivalence between statements (i) to (iv)' is  a direct
consequence of Proposition \ref{thecorollary}. Note that (iii)
implies (iv) and (iii)' implies (iv)' as each set of continuous
functions that separates the points must contain a countable subset
of functions which also separates the points of $X$ (due to
compactness of $X$).

\smallskip

The implications (v)'$\Longrightarrow$ (v) $\Longrightarrow $ (vi)
are clear.  The crucial ingredient for the remaining part of the
proof is the equality
\begin{equation} \label{star} \;\: F_f^2 (t) =
2 \|f\|^2 - \langle f, T_t f\rangle - \overline{\langle f, T_t
f\rangle}, \end{equation} which follows by  a direct computation.

Given this equality the implication (vi) $\Longrightarrow$ (iii)'
follows immediately. Conversely, (ii)' implies almost periodicity of
$F_f$ for any real-valued $f\in C(X)$ and the above equality
\eqref{star} then gives almost periodicity of $\langle f, T_t
f\rangle $ for any such $f$. By a simple polarisation argument this
then gives almost periodicity of $\langle f, T_t g\rangle$ for all
real valued continuous $f,g$. This, in turn easily implies (v) from
which (v)' follows by denseness of $C(X)$ in $L^2 (X,m)$.

\smallskip

We now turn to the last statement: Set $S_f : G\longrightarrow \CC,
S_f (t) = \langle f, T_t f\rangle$ for $f\in L^2 (X,m)$.  Whenever
$f\in C(X)$ is real valued, we have $F_f^2  = 2 \|f\|^2 - 2 S_f$ by
\eqref{star}. Hence, we can lift $S_f $ to a continuous function on
$\TT (F_f)$. As  $F_f \prec \underline{d}$ due to Proposition
\ref{thecorollary} we also have $F_f'\prec d'$. This implies that
there exists a continuous map $\pi : \TT (\underline{d})
\longrightarrow \TT (F_f)$ mapping $\underline{d}(t +\cdot) $ to
$F_f (t+ \cdot)$ for any $t\in G$. Hence,  we can then  lift $S_f$
to a continuous function on $\TT (\underline{d})$ as well. By
considering real- and imaginary parts we can then lift $S_f$ to a
continuous function on $\TT (\underline{d})$ for any $f\in C(X)$.
Taking limits and using that $C(X)$ is dense in $L^2 (X,m)$ we can
then lift $S_f$ for any $f\in L^2 (X,m)$ to $\TT (\underline{d})$.
Hence, the set of frequencies of $S_f$ is contained in the set of
frequencies of $\underline{d}$ for any $f\in L^2 (X,m)$.

Conversely, consider $F:= F_{(f_n), (c_n)}$ for a set of functions
$f_n$ in $C(X)$  and $c_n > 0$, $n\in \NN$, with $\sum_n c_n \|f_n\|
<\infty$  such that the $(f_n)$ separate the points of $X$. Then,
$F\sim \underline{d}$. Hence, $\TT(F) = \TT (\underline{d})$. Thus,
the frequencies of $\underline{d}$ agree with the frequencies of
$F$, which in turn are contained in the group generated by the
frequencies associated to the $f_n$, $n\in\NN$.
\end{proof}

\section{A characterization of  discrete spectrum}\label{Characterization}
 In this section we state and prove
our main result which gives a characterization of dynamical systems
with  discrete spectrum.

\medskip

\begin{theorem}[Characterizing  discrete spectrum]
\label{mainresult} Let $(X,G,m)$ be a dynamical system. Let $d$ be a
metric on $X$ inducing the topology. Then, the following assertions
are equivalent:

\begin{itemize}

\item[(i)] The dynamical system $(X,G,m)$ has   discrete
spectrum.

\item[(ii)] The function $\underline{d}$ is almost periodic.
\end{itemize}
If one of these equivalent conditions hold the group of eigenvalues
of $(X,G,m)$ equals the group generated by the frequencies of
$\underline{d}$.

\end{theorem}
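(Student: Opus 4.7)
The plan is to route the argument through Lemma \ref{characterization-ap}, which already reduces almost periodicity of $\underline{d}$ to almost periodicity of every correlation function $S_f(t):=\langle f, T_t f\rangle$ for $f\in L^2(X,m)$, and which moreover identifies the group generated by the frequencies of $\underline{d}$ with the group generated by the frequencies of all such $S_f$. Once this translation is in hand, the theorem becomes the standard spectral-theoretic dichotomy that $(X,G,m)$ has discrete spectrum precisely when every spectral measure of the unitary representation $(T_t)_{t\in G}$ is purely atomic.

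For (i)$\Rightarrow$(ii), I would fix an orthonormal basis $(\phi_\gamma)$ of eigenfunctions with $T_t\phi_\gamma=\gamma(t)\phi_\gamma$ and expand an arbitrary $f\in L^2(X,m)$ as $f=\sum_\gamma c_\gamma\phi_\gamma$. A direct computation using the inner product convention of the paper gives $S_f(t)=\sum_\gamma|c_\gamma|^2\gamma(t)$. Because $\sum|c_\gamma|^2=\|f\|^2<\infty$ and $|\gamma(t)|=1$, this series converges absolutely and uniformly in $t$, so $S_f$ is a uniform limit of trigonometric polynomials and hence Bohr almost periodic. Condition (v)' of Lemma \ref{characterization-ap} is satisfied and the equivalence proved there delivers (ii). The frequencies of $S_f$ are precisely those $\gamma$ with $c_\gamma\neq 0$, all of which are eigenvalues; as $f$ varies over $L^2(X,m)$, every eigenvalue appears, which is one inclusion in the group-of-eigenvalues statement.

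For (ii)$\Rightarrow$(i), I would use Bochner's theorem to write each continuous positive definite $S_f$ as the Fourier--Stieltjes transform of a unique positive finite spectral measure $\mu_f$ on $\widehat{G}$. The analytic heart of the argument is the correspondence: a continuous positive definite function on $G$ is Bohr almost periodic if and only if its Bochner measure on $\widehat{G}$ is purely atomic. Granting this, almost periodicity of every $S_f$ forces every $\mu_f$ to be supported on a countable set of characters, and each atom at $\gamma$ contributes a genuine $\gamma$-eigenfunction inside the cyclic subspace generated by $f$. Taking an orthogonal sum of such cyclic subspaces exhausting $L^2(X,m)$ then produces an orthonormal basis of eigenfunctions, which is (i). Since the atoms of $\mu_f$ are exactly the frequencies of $S_f$, letting $f$ range over $L^2(X,m)$ recovers all eigenvalues and closes the group identification.

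The main obstacle will be supplying the atomicity-versus-almost-periodicity correspondence for positive definite functions on a general locally compact $\sigma$-compact abelian $G$. The route I would take is via the Bohr compactification $bG$: a continuous $S_f$ is Bohr almost periodic iff it extends continuously to $bG$, and since the dual of $bG$ is $\widehat{G}$ equipped with the discrete topology, uniqueness of the Fourier--Stieltjes transform then forces $\mu_f$ to be concentrated on a countable set. Either this short argument or an appeal to a standard reference will suffice; everything else in the proof is bookkeeping on top of Lemma \ref{characterization-ap}.
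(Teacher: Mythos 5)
Your proposal is correct and follows essentially the same route as the paper: reduce via Lemma \ref{characterization-ap} to almost periodicity of all the diagonal matrix coefficients $S_f(t)=\langle f,T_tf\rangle$, and then invoke the classical equivalence between Bohr almost periodicity of a continuous positive definite function and pure atomicity of its Bochner (spectral) measure, together with the identification of frequencies of $S_f$ with the atoms of $\mu_f$. The only difference is that the paper cites this spectral-theoretic equivalence (to Loomis and to Lenz--Strungaru) whereas you supply the proofs of both directions yourself, via the eigenbasis expansion and the Bohr compactification; both arguments are sound.
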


\textbf{Remark.} If  $\underline{d}$ is almost periodic,  the group
generated its frequencies is the dual group of $\TT (\underline{d})$
(as follows from the general discussion above). Moreover, it is not
hard to see that this group is also given as the Hausdorff
completion of $G$ with respect to $d'$.

\begin{proof}
We first deal with the equivalence statement. By the Lemma
\ref{characterization-ap}, the function $\underline{d}$ is almost
periodic if and only if $G\longrightarrow \CC, \;  t\mapsto \langle
f, T_t f\rangle=S_f (t) $ is almost periodic for any $f\in L^2
(X,m)$. This in turn is a well-known characterization of   discrete
spectrum. Indeed,  for any $f\in L^2 (X,m)$ there exists a unique
measure $\mu_f$  on $\widehat{G}$ with
$$\langle f, T_t f\rangle =
\int_{\widehat{G}} \gamma (t) d\mu_f (\gamma)$$ (see e.g.
\cite{Loomis}). Now, discrete spectrum just means that all these
measures are point measures and  pure pointedness of $\mu_f$ is
equivalent to almost periodicity of $t\mapsto \langle f, T_t
f\rangle$ (see e.g. \cite{LS} for recent discussion).

We now turn to the second statement of the theorem. By the  last
statement of Lemma \ref{characterization-ap} the group generated by
the frequencies of $\underline{d}$ is the group generated by the
frequencies of the $S_f$, $f\in L^2 (X,m)$. The frequencies of
$S_f$, however, are just the atoms of the measures $\mu_f$. Hence,
they generate  the group of eigenvalues.
\end{proof}

It is possible to rephrase the result in terms of almost-periods.
This will clarify the relationship between our result and earlier
results.  Whenever $e$ is a continuous  pseudometric on $X$ a $t\in
G$ is called a \textit{measure theoretic  $\varepsilon$-almost
period of $e$} if
$$m(\{x \in X : e(x,tx)>\varepsilon\}) <  \varepsilon.$$

\begin{lemma}[Almost periodicity and measure-theoretic $\varepsilon$
- almost periods]\label{theperiods} Let $e$ be a continuous
pseudometric on $X$. Then, the following assertions are equivalent:
\begin{itemize}
\item[(i)] For any $\varepsilon>0$ the set of measure theoretic $\varepsilon$-almost
periods of $e$ is relatively dense.
\item[(ii)] The function $\underline{e}$ is almost periodic.
\end{itemize}

\end{lemma}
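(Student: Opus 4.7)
The plan is to verify the equivalence via the criterion from Lemma \ref{observation}: since $\underline{e}$ satisfies $\underline{e}(0) = 0$ and $\|\underline{e}(s+\cdot) - \underline{e}(t+\cdot)\|_\infty = \underline{e}(s-t)$, it is Bohr almost periodic if and only if the sublevel set $\{\tau \in G : \underline{e}(\tau) \leq \eta\}$ is relatively dense for every $\eta > 0$. The task thus reduces to comparing these sublevel sets with the sets of measure theoretic $\varepsilon$-almost periods.

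For (ii) $\Longrightarrow$ (i), the plan is to apply a Chebyshev-type estimate. Given $\varepsilon > 0$, note that whenever $\underline{e}(t) < \varepsilon^2$, Markov's inequality yields
\[
\varepsilon \cdot m(\{x : e(x,tx) > \varepsilon\}) \leq \int_X e(x,tx)\,dm(x) = \underline{e}(t) < \varepsilon^2,
\]
so $m(\{x : e(x,tx) > \varepsilon\}) < \varepsilon$, meaning $t$ is a measure theoretic $\varepsilon$-almost period. Since by (ii) and Lemma \ref{observation} the set $\{t : \underline{e}(t) < \varepsilon^2\}$ is relatively dense, so is the (larger) set of measure theoretic $\varepsilon$-almost periods.

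For (i) $\Longrightarrow$ (ii), the plan is to exploit the boundedness of $e$. Since $X$ is compact and $e$ is continuous, $C := \|e\|_\infty < \infty$. Given $\eta > 0$, choose $\varepsilon > 0$ with $\varepsilon(1 + C) \leq \eta$. For any measure theoretic $\varepsilon$-almost period $t$, setting $M := \{x : e(x,tx) > \varepsilon\}$, we split
\[
\underline{e}(t) = \int_{X \setminus M} e(x,tx)\,dm(x) + \int_M e(x,tx)\,dm(x) \leq \varepsilon \cdot m(X \setminus M) + C \cdot m(M) \leq \varepsilon + C\varepsilon \leq \eta.
\]
Thus the set of measure theoretic $\varepsilon$-almost periods is contained in $\{t : \underline{e}(t) \leq \eta\}$, and its relative denseness (from (i)) transfers to the latter set. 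Invoking Lemma \ref{observation} gives almost periodicity of $\underline{e}$.

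There is no real obstacle here beyond selecting the correct thresholds for $\varepsilon$ in terms of $\eta$ and exploiting that $e$ is bounded; the argument is essentially a Markov inequality paired with a truncation of the integral defining $\underline{e}$.
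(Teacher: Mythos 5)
Your proof is correct and follows essentially the same route as the paper: both directions reduce to the relative denseness of the sublevel sets $\{\tau : \underline{e}(\tau)\leq \eta\}$ via Lemma \ref{observation}, with the forward direction handled by splitting the integral over $M$ and its complement using $\|e\|_\infty<\infty$, and the converse by the Markov/Chebyshev estimate with threshold $\varepsilon^2$. The only cosmetic difference is that the paper fixes $\varepsilon_1$ with $\varepsilon_1\|e\|_\infty+\varepsilon_1<\varepsilon$ where you write $\varepsilon(1+C)\leq\eta$; the argument is identical.
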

\begin{proof}

(i)$\Longrightarrow$(ii): We have to show that the set of $\{ t\in G
: \underline{e} (t) \leq \varepsilon \}$ is relatively dense for any
$\varepsilon >0$ (compare Lemma \ref{observation}). Let
$\varepsilon_1>0$ with
$$\varepsilon_1  \|e\|_\infty + \varepsilon_1 < \varepsilon $$ be given.
Chose a measure theoretic  $\varepsilon_1$-almost period $t$  of $e$
and set
$$M:=\{x\in X : e(x,tx) >\varepsilon_1\}.$$
Then, a direct computation gives
$$\underline{e}(t) =  \int_M e(x,tx) dm +
\int_{X\setminus M} e(x,tx) dm \leq \varepsilon_1 \|e\|_\infty +
m(X\setminus M) \varepsilon_1 < \varepsilon.$$ As the set of measure
theoretic $\varepsilon_1$-almost periods is relatively dense by (i)
the desired statement follows.

\medskip

(ii)$\Longrightarrow$(i): This follows by mimicking an argument
given in the proof of Lemma \ref{thelemma}:  Let $\varepsilon >0$ be
given. By (ii) the set $\{t \in G: \underline{e}(t) <
\varepsilon^2\}$ is relatively dense. For any $t$ in this set we
obtain with $N:=\{x\in X : e(x,tx)
>\varepsilon\}$
$$\varepsilon m(N) \leq \int_X e(x,tx) dm = \underline{e} (t)  < \varepsilon^2$$
and, hence,  $m(N)< \varepsilon$. This finishes the proof.
\end{proof}

From the previous lemma and the main result, Theorem
\ref{mainresult}, we directly obtain the following consequence.

\begin{corollary}\label{cor-sol}
Let $(X,G,m)$ be a dynamical system and $d$ a metric on $X$ inducing
the topology on $X$. Then, the following assertions are equivalent:
\begin{itemize}
\item[(i)] For any $\varepsilon>0$ the set of measure theoretic
$\varepsilon$-almost periods of $d$  is relatively dense.
\item[(ii)] The dynamical system has  discrete  spectrum.
\end{itemize}
\end{corollary}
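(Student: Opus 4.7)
The plan is to obtain this statement as an immediate chain of two equivalences, neither of which requires new work: one from Lemma \ref{theperiods} and one from Theorem \ref{mainresult}.

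First I would observe that since $d$ is a metric inducing the topology of the compact space $X$, it is in particular a continuous pseudometric on $X$. Thus Lemma \ref{theperiods} applies with $e := d$, and yields that (i) holds if and only if the associated function $\underline{d}$ is almost periodic in the sense of Bohr. This is precisely condition (ii) of Theorem \ref{mainresult}.

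Next I would invoke Theorem \ref{mainresult} directly: almost periodicity of $\underline{d}$ is equivalent to $(X,G,m)$ having discrete spectrum. Concatenating the two equivalences gives (i) $\Longleftrightarrow$ almost periodicity of $\underline{d}$ $\Longleftrightarrow$ (ii), which is the claim.

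Since the corollary is stated as a direct consequence, there is essentially no obstacle. The only small point worth flagging explicitly, should one wish to be careful, is that Lemma \ref{theperiods} is formulated for an arbitrary continuous pseudometric $e$ on $X$, so one must just remark that a metric inducing the topology qualifies; beyond this, no further argument is needed.
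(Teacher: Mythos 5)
Your proposal is correct and follows exactly the paper's own route: the corollary is obtained by combining Lemma \ref{theperiods} (applied to $e=d$, which is indeed a continuous pseudometric) with Theorem \ref{mainresult}. No further comment is needed.
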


\textbf{Remarks.} The implication (i)$\Longrightarrow$(ii) of the
corollary is proven as Theorem 3.2 in \cite{Sol} (under an
additional ergodicity assumption).  There also  a partial converse
is provided in Proposition 3.3. This converse needs  an additional
requirement of continuity of eigenfunctions and it is remarked that
'it is unlikely' that a full converse of the theorem holds. Our
corollary shows that such a  full converse does hold.

\bigskip

We can also derive the following consequence.

\begin{corollary}\label{cor-eq}
Let $(X,G,m)$ be a dynamical system. Let $\mathcal{F}$ be a family
of continuous functions on $X$, which separates the points of $X$.
Then, the following assertions are equivalent:

\begin{itemize}
\item[(i)] For any $\varepsilon >0$ and each $f\in\mathcal{F}$ the
set of $\varepsilon$-almost periods of $\underline{e}_f$ is
relatively dense.

\item[(ii)] The dynamical system has   discrete  spectrum.

\end{itemize}

\end{corollary}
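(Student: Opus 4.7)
The plan is to reduce this corollary to the machinery already assembled, especially Lemma \ref{characterization-ap} and Theorem \ref{mainresult}, by reinterpreting the almost-period condition on $\underline{e}_f$.

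First I would note that for each $f\in C(X)$ the function $\underline{e}_f\colon G\to[0,\infty)$ satisfies the standing identity \eqref{condition}, namely $\|\underline{e}_f(s+\cdot)-\underline{e}_f(t+\cdot)\|_\infty = \underline{e}_f(s-t)$. Consequently, a point $t\in G$ is an $\varepsilon$-almost period of $\underline{e}_f$ in the usual Bohr sense (that is, $\|\underline{e}_f(t+\cdot)-\underline{e}_f\|_\infty\leq\varepsilon$) if and only if $\underline{e}_f(t)\leq\varepsilon$. Thus assertion (i) of the corollary is precisely the statement that for each $f\in\mathcal{F}$ the sublevel sets $\{t\in G:\underline{e}_f(t)\leq\varepsilon\}$ are relatively dense for every $\varepsilon>0$. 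By the criterion in Lemma \ref{observation}, applied to $F=\underline{e}_f$, this is equivalent to saying that $\underline{e}_f$ is almost periodic for every $f\in\mathcal{F}$.

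Now I would invoke the hypothesis that $\mathcal{F}$ separates the points of $X$. Under this equivalent reformulation of (i), condition (iii) of Lemma \ref{characterization-ap} is satisfied: the collection of continuous functions $f$ for which $\underline{e}_f$ is almost periodic contains $\mathcal{F}$ and hence separates points. By the equivalences in that lemma, this yields (i) of Lemma \ref{characterization-ap}, namely that $\underline{d}$ is almost periodic. An application of Theorem \ref{mainresult} then produces discrete spectrum, proving (i)$\Longrightarrow$(ii).

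For the converse, assume discrete spectrum. Theorem \ref{mainresult} gives that $\underline{d}$ is almost periodic, which by Lemma \ref{characterization-ap} (the implication (i)$\Longrightarrow$(ii)) forces $\underline{e}_f$ to be almost periodic for every $f\in C(X)$, in particular for every $f\in\mathcal{F}$. Rewinding the first step, Lemma \ref{observation} then yields relative denseness of $\{t:\underline{e}_f(t)\leq\varepsilon\}$ for each $\varepsilon>0$ and each $f\in\mathcal{F}$, which is assertion (i). There is no genuine obstacle here; the only point worth care is the initial reinterpretation of $\varepsilon$-almost periods, since it relies on the special structural identity \eqref{condition} enjoyed by $\underline{e}_f$, without which the equivalence to sublevel-set density would not be automatic.
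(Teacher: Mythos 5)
Your proof is correct and follows the same overall route as the paper: translate assertion (i) into ``$\underline{e}_f$ is almost periodic for each $f\in\mathcal{F}$'', feed this into condition (iii) of Lemma \ref{characterization-ap} to get almost periodicity of $\underline{d}$, and conclude via Theorem \ref{mainresult} (and reverse these steps for the converse). The one place you diverge is the translation step: the paper routes it through Lemma \ref{theperiods}, i.e.\ it reads the almost periods in (i) as the \emph{measure-theoretic} $\varepsilon$-almost periods of the pseudometric $e_f$ on $X$, whereas you read them as Bohr $\varepsilon$-almost periods of the function $\underline{e}_f$ on $G$ and use the identity \eqref{condition} together with Lemma \ref{observation}. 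Both readings reduce (i) to the same intermediate statement, and your version is arguably the more literal parsing of the notation $\underline{e}_f$ (a function on $G$, not a pseudometric on $X$); your observation that \eqref{condition} makes Bohr almost periods coincide with sublevel sets is exactly the right justification for that reading.
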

\begin{proof} (ii)$\Longrightarrow$(i): This follows from
combining the implication (i)$\Longrightarrow$(ii) of Theorem
\ref{mainresult}, the implication (i)$\Longrightarrow$(ii) of Lemma
\ref{characterization-ap} and the first lemma of this section. (This
reasoning actually works for any $f\in C(X)$ and not only for
$f\in\mathcal{F}$.)

\smallskip

(i)$\Longrightarrow$(ii): This follows from combining the first
lemma of this section with the implication (iii)$\Longrightarrow$(i)
of Lemma \ref{characterization-ap} and  Theorem \ref{mainresult}
(ii)$\Longrightarrow$(i).
\end{proof}

\textbf{Remark.} A possible choice of the family $\mathcal{F}$ is
given as $d(x,\cdot)$, $x\in X$.

\section{Connection to the autocorrelation
measure}\label{Connections} Our considerations are motivated by the
study of diffraction theory for  quasicrystals. Diffraction theory
for quasicrystals and the relationship with dynamical systems has
gained substantial  attention in the last two  decades. Indeed, from
the very beginning tiling and point set dynamical  systems with
discrete spectrum have played a key role
\cite{Dwo,Rob,Sol,Sol2,Schl}.  We refer the interested reader to the
surveys \cite{BL-survey,Len-survey} and the corresponding parts of
\cite{Lenz} for further details and references. In this section we
briefly sketch the necessary background to put our main result into
this  context.

\medskip

As discussed in \cite{BL}, diffraction theory for quasicrystals can
conveniently be set up in the framework of translation bounded
measures on a locally compact, $\sigma$-compact  abelian group $G$.
In fact, one can  go beyond translation bounded measures and deal
with measures satisfying only existence of a suitable second moment
\cite{LS}  or even deal with suitable pairings of functions on $G$
and the elements of the dynamical system \cite{LM}. However, here we
will stick to translation bounded measures as this seems to
encompass the usual models for quasicrystals. We follow \cite{BL} to
which we refer for further details and references.

Let $C_c (G)$ be the space of continuous functions on $G$ with
compact support. A measure $\mu$ on $G$ is called
\textit{translation bounded} if its total variation $|\mu|$
satisfies
$$\sup |\mu| (t + U) < \infty$$
for one (all) relatively compact open $U$ in $G$. The set of all
translation bounded measures is denoted by $M^\infty (G)$. It is
equipped with the vague topology. There is a natural action $\alpha$
of $G$ on $M^\infty (G)$ by translations where for $t\in G$ and
$\mu\in M^\infty (G)$ the measure  $\alpha_t (\mu)$ satisfies
$\alpha_t (\mu) (\varphi) = \mu (\varphi (\cdot - t))$ for all
$\varphi \in C_c (G)$. Whenever $X$ is a compact  subset  of
$M^\infty (G)$ which is invariant under the translation action and
$m$ is an invariant probability measure on $X$, we call $(X,G,m)$ a
\textit{dynamical system of translation bounded measures} or just
TMDS for short. Such a system comes with a canonical  map
$$N : C_c (G)\longrightarrow C (X) \mbox{ with } N_\varphi (\mu) =
\int \varphi (-s) d\mu (s).$$ Let us emphasize that the existence of
such a map is a distinctive feature of TMDS compared to general
dynamical systems. There exists then a unique translation bounded
measure $\gamma$ on $(X,G,m)$ with

\begin{equation}\label{gamma}
\gamma \ast \varphi \ast \widetilde{\varphi} (t) = \langle
N_\varphi, T_t N_\varphi\rangle
\end{equation} for all $\varphi \in
C_c (G)$ and all $t\in G$.  Here, $\ast$ denotes the convolution and
$\widetilde{\varphi} (t) = \overline{\varphi} (-t)$.  The measure
$\gamma$ is called the \textit{autocorrelation} of the TMDS. This
measure allows for a Fourier transform $\widehat{\gamma}$ which is a
(positive) measure on $\widehat{G}$.  A main result of the theory
gives the following:

\medskip

\textbf{Theorem.} \textit{ The TMDS $(X,G,m)$ has discrete spectrum
if and only if the measure $\gamma$ is strongly almost periodic. In
this case, the group of eigenvalues of $(X,G,m)$ is the group
generated by  $\{ k\in\widehat{G} : \widehat{\gamma} (\{k\})
>0\}$.}

\bigskip

A few \textbf{remarks} are in order:

\begin{itemize}

\item The measure $\gamma$ is called strongly almost periodic of
$\gamma \ast \varphi$ is almost periodic (in the sense of Bohr) for
all $\varphi \in C_c (G)$.

\item The above
theorem is usually formulated with the assumption that
$\widehat{\gamma}$ is a pure point measure. However, as is
 well-known, see e.g. Proposition 7 and Theorem 4  in \cite{BM} for
a discussion in the context of aperiodic order, the measure $\gamma$
is strongly almost periodic if and only if $\widehat{\gamma}$ is a
pure  point measure.

\item The theorem has a long history. The
connection between diffraction measure and  point spectrum goes back
to work of Dworkin \cite{Dwo}. The first statement giving an
equivalence (in the more restricted setting of uniquely ergodic
dynamical systems of point sets satisfying the regularity
requirement of finite local complexity) can be found in \cite{LMS}.
This was then generalized (in slightly different directions) in
\cite{Gou} and \cite{BL}. A unified treatment of \cite{BL,Gou} was
given in \cite{LS}. Recently an even more general result was given
in \cite{LM}. In the context of TMDS discussed in this section  the
theorem was first proven in \cite{BL}.

\end{itemize}

Clearly, the preceding result is quite close to our main result:
Pure pointedness of the spectrum is characterized by almost
periodicity of a suitable function  (in this case the measure
$\gamma$) and the group of eigenvalues is generated by the
frequencies of the function (in this case the atoms of the
diffraction measure). In fact, we can derive the previous result
from our main result (provided the group is metrizable) along the
following lines:

The measure $\gamma$ is strongly almost periodic if and only if
$\gamma \ast \varphi \ast \widetilde{\varphi}$ is almost periodic
for all   $\varphi \in C_c (G)$. As $G$ is metrizable and
$\sigma$-compact, there exists  a dense set $\varphi_n$, $n\in\NN$,
of functions in  $C_c (G)$ and $\gamma$ is almost periodic if and
only if $\gamma\ast \varphi_n \ast \widetilde{\varphi_n}$ is almost
periodic for each $n\in\NN$.

By the formula given above in \eqref{gamma} this is equivalent to
$t\to \langle N_{\varphi_n}, T_t N_{\varphi_n}\rangle$ being almost
periodic for all  $n\in\NN$. As the $\varphi_n$, $n\in\NN$, are
dense in $C_c (G)$, they clearly separate the points of $X$. Hence,
condition (vi) of Lemma \ref{characterization-ap} is satisfied. The
lemma then gives almost periodicity of $\underline{d}$ and Theorem
\ref{mainresult} shows discrete spectrum. The statement on the
frequencies follows by going through the argument and keeping track
of the involved frequencies.

\medskip

\textbf{Remark.} In the case of TMDS  the map $N$ and the Fourier
transform $\widehat{\gamma}$ of the autocorrelation measure $\gamma$
arise by a limiting procedure out of quantities which are defined
for any dynamical system. Details are discussed in this remark:
Whenever $(X,G,m)$ is a dynamical system each $f\in L^2 (X,m)$ gives
rise to the  map
$$N^f : C_c (G)\longrightarrow L^2(X,m)$$
via $N^f (\varphi):= \int \varphi (-t) T_t f d t$. A direct
computation then shows for any $\varphi, \psi \in C_c (G)$
\begin{eqnarray}\label{drei}\langle N^f (\varphi), N^f (\psi)\rangle  = \int_{\widehat{G}}
\overline{\widehat{\varphi}} \;  \widehat{\psi} \;  d\mu_{f}
(\gamma),
\end{eqnarray} where $\widehat{\sigma}$ denotes the Fourier transform of
$\sigma$ defined by
$$\widehat{\sigma} (\gamma) = \int
\overline{\gamma (t)} \sigma (t) d m_G(t)$$ and $\mu_f$ is the
spectral measure of $f$ (i.e. the  unique finite measure  $\mu_f$ on
$\widehat{G}$ with $\langle f, T_{t} f\rangle = \int_{\widehat{G}}
\gamma (t) d\mu_f (\gamma)$ for all $t\in G$). In this way, we
obtain for any specific $f$ a map $N^f$, which encodes the spectral
measure of $f$ in the sense that \eqref{drei} holds. It is not hard
to conclude from \eqref{drei} that there exists a unique isometry
$$\Theta^f : L^2 (\widehat{G},\mu_f)\longrightarrow L^2 (X,m)$$
with $\Theta^f (\widehat{\sigma}) = N^f (\sigma)$ for any $\sigma
\in C_c (G)$. Indeed, this can be seen as one version of the
spectral theorem.  If $(X,G,m)$ is a TMDS, we can  replace  $f$ by a
canonical limiting object. More specifically, we can consider an
approximate unit $(\varphi_\alpha)$ in $C_c (G)$ and define
$f_\alpha \in L^2 (X,m)$ by $f_\alpha (\mu) := \int
\varphi_\alpha(-s) d\mu (s)$. Then,
$$N^{f_\alpha} (\varphi) \to N (\varphi) \mbox{ in $L^2 (X,m)$}. $$
Indeed, by construction and the defining properties of an
approximate unit  we have pointwise (in $\mu \in X$) convergence of
$N^{f_\alpha}(\varphi) (\mu) = \varphi \ast (\varphi_\alpha \ast\mu)
(0)$ to $ \varphi \ast \mu (0) = N (\varphi) (\mu)$ as well as a
uniform (in $\mu \in X$) bound $|N^{f_\alpha} (\varphi) (\mu)|\leq
\|f\|_\infty |\mu| (K) \leq C <\infty$ due to the assumption on $X$.
Thus, the maps $N^{f_\alpha}$ converge to the map $N$. Moreover, the
spectral measures $\mu_{f_\alpha}$ converge to $\widehat{\gamma}$
(see Corollary 1 in \cite{BL}). So, both $N$ and $\widehat{\gamma}$
arise by a limiting procedure which involves an approximate unit.

\section*{Acknowledgments} The author gratefully acknowledges
recent inspiring discussions with Gabriel Fuhrmann, Maik Gr\"oger
und Tobias J\"ager. The present work also owes to a highly
enlightening discussion on almost periodicity  with Jean-Baptiste
Gou\'{e}r\'{e} in 2004 and to various discussions with Michael
Baake, Robert V. Moody and  Nicolae Strungaru.

\bigskip

\end{document}